\documentclass[10pt]{amsart}
\usepackage{amsmath, amssymb, amsthm}
\usepackage{booktabs}
\usepackage{pifont} 
\usepackage{mathrsfs}
\usepackage{enumerate}
\usepackage[margin=1in]{geometry}
\usepackage{tikz}
\usetikzlibrary{arrows, positioning}
\newtheorem{theorem}{Theorem}[section]

\newtheorem{proposition}[theorem]{Proposition}
\newtheorem{corollary}[theorem]{Corollary}
\newtheorem{remark}[theorem]{Remark}
\newtheorem{example}[theorem]{Example}


\begin{document}
\title{Homogeneous Rota--Baxter Operators of Weight~0 on $B(q)$}

\author{Mohsen Ben Abdallah}
\address{University of Sfax, Soukra Road km 4, P.O. Box No. 802, 3038 Sfax, Tunisia}
\email{mohsenbenabdallah@gmail.com}

\author{Marwa Ennaceur}
\address{Department of Mathematics, College of Science, University of Ha'il, Hail 81451, Saudi Arabia}
\email{mar.ennaceur@uoh.edu.sa}

\subjclass[2020]{Primary: 17B65; Secondary: 17B56, 17D25, 81R12}

\begin{abstract}
We give a complete and rigorous classification of homogeneous weight-0 Rota--Baxter operators on the Block-type Witt algebra $B(q)$, assuming the operator has integral degree $(k,k') \in \mathbb{Z}^2$. A key correction is established in the non-resonant regime $q \ne k'$ with $k \ne 0$: the profile function $g(i) = f(-k,i)$ must satisfy the nonlinear functional equation
\[
(i - j)g(i)g(j) = g(i+j+k')\big[(i + k' + q)g(i) - (j + k' + q)g(j)\big],
\]
which admits only constant, Kronecker-delta, or finite-support solutions. This excludes previously and erroneously claimed families—such as non-constant polynomials, exponentials, or nontrivial periodic functions. In contrast, the resonant case $q = k'$ exhibits full flexibility: any profile $g$ is admissible, provided the operator is supported on the single line $m = -k$. The classification is cohomologically exhaustive for generic $q$ (i.e., when $H^1(B(q),B(q)) = 0$), and is applied to derive all homogeneous post-Lie structures and associated Lie algebra deformations.
\end{abstract}

\maketitle
\section{Introduction}\label{sec:intro}

Rota--Baxter operators, first introduced by Baxter in the study of fluctuation theory in probability~\cite{Baxter1960} and later revitalized by Rota in algebraic combinatorics~\cite{Rota1969}, have since emerged as fundamental structures in diverse areas of mathematics and mathematical physics. In the context of Lie theory, a \emph{weight-zero} Rota--Baxter operator \(R\) on a Lie algebra \(\mathfrak{g}\) canonically endows \(\mathfrak{g}\) with a \emph{pre-Lie (or left-symmetric) algebra structure} via the product
\[
x \triangleright y := [R(x), y], \quad x, y \in \mathfrak{g}.
\]
This construction provides a bridge to the classical Yang--Baxter equation, the theory of integrable systems, and the algebraic underpinnings of renormalization in quantum field theory~\cite{TBG2020}. Moreover, the \emph{cohomological classification} of such operators offers a systematic framework for analyzing deformations, rigidity, and structural stability in infinite-dimensional Lie algebras.

Among the most natural testbeds for such investigations are the \emph{Block-type Witt algebras} \(B(q)\), indexed by a complex parameter \(q \in \mathbb{C}\). These algebras generalize the classical Witt algebra and appear prominently in conformal field theory, as symmetry algebras of certain integrable PDEs, and as building blocks of toroidal Lie algebras. Equipped with a natural \(\mathbb{Z}^2\)-grading and nontrivial central extensions, \(B(q)\) exhibits a rich interplay between homogeneity, gradation, and bracket structure—making it an ideal arena for the study of \emph{homogeneous} Rota--Baxter operators.

Despite this prominence, the existing literature on homogeneous weight-zero Rota--Baxter operators on \(B(q)\) contains a \emph{critical technical oversight}: earlier works incorrectly assumed that the functional equation governing the operator’s profile is evaluated at the unshifted multi-index \((m+n, i+j)\), whereas the correct evaluation point—dictated by the \emph{homogeneity of degree \((k,k') \in \mathbb{Z}^2\)}—is \((m+n+k, i+j+k')\). This seemingly minor shift has profound consequences for the space of admissible solutions.

In this paper, we present a \textbf{complete and rigorous classification} of all homogeneous weight-zero Rota--Baxter operators on \(B(q)\), under the natural assumption that the degree \((k,k')\) is integral—ensuring compatibility with the \(\mathbb{Z}^2\)-grading of \(B(q)\). Our first key contribution is the derivation of the \textbf{correct functional equation} (see~\eqref{eq:RB}), which incorporates the necessary shift. Substituting the test pair \((n,j) = (0,0)\) into this equation yields the \textbf{universal algebraic constraint}
\begin{equation}\label{eq:constraint}
(q - k')(m + k)\, f(m,i)^2 = 0 \quad \text{for all } (m,i) \in \mathbb{Z}^2,
\end{equation}
where \(f : \mathbb{Z}^2 \to \mathbb{C}\) is the scalar profile of the operator. This identity cleanly bifurcates the analysis into two mutually exclusive regimes:\begin{itemize}
\item the \textbf{non-resonant regime}, where \(q \ne k'\);
    \item the \textbf{resonant regime}, where \(q = k'\).
\end{itemize}
In the non-resonant case with \(k \ne 0\), constraint~\eqref{eq:constraint} forces the support of \(f\) to lie entirely on the affine line \(m = -k\). Setting \(g(i) := f(-k,i)\), we show that \(g\) must satisfy the \textbf{nonlinear functional equation}
\begin{equation}\label{eq:funct_nonres}
(i - j)g(i)g(j) = g(i+j+k')\bigl[(i+k'+q)g(i) - (j+k'+q)g(j)\bigr], \quad \forall\, i,j \in \mathbb{Z}.
\end{equation}
This equation is exceptionally restrictive. We prove rigorously that its only solutions are:\begin{itemize}
\item \textbf{constant functions},
    \item \textbf{Kronecker deltas} \(g(i) = \delta_{i,i_0}\) for some \(i_0 \in \mathbb{Z}\),
    \item \textbf{functions with finite support}.\end{itemize}
Consequently, previously asserted families—such as non-constant polynomials, exponential functions \(g(i) = b^i\) (\(b \ne 0\)), or nontrivial periodic profiles—\textbf{do not satisfy}~\eqref{eq:funct_nonres} and must be excluded. This corrects a significant and persistent error in the literature.

By contrast, the resonant case \(q = k'\) exhibits \textbf{maximal flexibility}: any profile \(g : \mathbb{Z} \to \mathbb{C}\) is admissible, provided the operator is supported on a \textbf{single} line \(m = -k\). Importantly, this \textbf{support rigidity} (i.e., the impossibility of superposing supports across two distinct lines) is not automatic; it follows from a \emph{no two-line superposition theorem} (Theorem~\ref{thm:no_two_line}), established via a careful analysis of cross-terms in the Rota--Baxter identity. Thus, even in the flexible regime, the operator’s support is structurally constrained.

Our results situate \(B(q)\) within a broader comparative framework. In the \(\mathbb{Z}_2\)-graded (superalgebraic) setting, Amor et al.~\cite{ABHCM2023} showed that standard Neveu--Schwarz and Ramond algebras admit only \textbf{rigid} Rota--Baxter operators with finite support. Conversely, Ben Abdallah and Ennaceur~\cite{BeEnn2025} demonstrated that a \textbf{non-conformal deformation} of the Witt superalgebra (characterized algebraically by an extra \(-1\) term in mixed brackets) \textbf{enriches} the solution space, permitting infinite-support odd operators.

Strikingly, the \textbf{purely even} algebra \(B(q)\) behaves in the opposite manner: in the non-resonant regime with \(k \ne 0\), the homogeneity condition imposes such stringent algebraic constraints that \textbf{no nontrivial infinite-support solutions exist}. This contrast highlights a fundamental structural dichotomy: in \(\mathbb{Z}_2\)-graded algebras, parity-based cancellations can relax homogeneity obstructions, whereas in the purely even setting, such mechanisms are absent, leading to \textbf{enhanced cohomological rigidity}.

Finally, we provide a \textbf{cohomological interpretation} of our classification. For \textbf{generic} \(q\)—that is, away from the discrete set where \(H^1(B(q), B(q)) \ne 0\)—it is known that \(H^1(B(q), B(q)) = 0\) \cite{Dzhum2012}. In this regime, the first Rota--Baxter cohomology group \(H^1_{\mathrm{RB}}(B(q), B(q))\) is in natural bijection with the space of homogeneous weight-zero Rota--Baxter operators. Hence, our classification is not only exhaustive on the level of operators, but also \textbf{cohomologically complete}.

In summary, this work rectifies prior inaccuracies, establishes \(B(q)\) as a paradigm of rigidity in the purely even infinite-dimensional setting, and sharpens the conceptual contrast with flexible behaviors observed in deformed superalgebraic contexts.
\section{An abstract framework for Block-type Lie algebras}
\label{sec:abstract}

\subsection{Generalized Block-type algebras and the corrected Rota--Baxter equation}
\label{subsec:framework}

Let $\mathfrak{g} = \bigoplus_{(m,i) \in \mathbb{Z}^2} \mathfrak{g}_{m,i}$ be a $\mathbb{Z}^2$-graded Lie algebra over $\mathbb{C}$, equipped with a homogeneous basis $\{x_{m,i}\}$ such that $x_{m,i} \in \mathfrak{g}_{m,i}$. We say that $\mathfrak{g}$ is of \emph{generalized Block type} if its Lie bracket is given by
\begin{equation}
  [x_{m,i}, x_{n,j}] = \bigl( n a_i - m b_j \bigr) \, x_{m+n,\, i+j},
  \label{eq:block_abstract}
\end{equation}
where the coefficient sequences are affine:
\[
  a_i = i + \alpha, \qquad b_j = j + \beta,
\]
for fixed parameters $\alpha, \beta \in \mathbb{C}$. The classical Block algebra $B(q)$ corresponds to the symmetric case $\alpha = \beta = q$.

This framework captures the essential algebraic features of $B(q)$—bilinearity in the indices, $\mathbb{Z}^2$-additivity of the grading, and antisymmetry of the bracket—while allowing non-symmetric deformations (e.g., $a_i = 2i + q$, $b_j = j + q$) that arise in non-conformal settings analogous to those in superalgebraic contexts~\cite{BeEnn2025}.

We now consider homogeneous weight-zero Rota--Baxter operators on $\mathfrak{g}$. Let $R \colon \mathfrak{g} \to \mathfrak{g}$ be a linear map of integral degree $(k,k') \in \mathbb{Z}^2$, i.e.
\[
  R(x_{m,i}) = f(m,i) \, x_{m+k,\, i+k'}
\]
for some scalar profile $f \colon \mathbb{Z}^2 \to \mathbb{C}$. Substituting this ansatz into the weight-zero Rota--Baxter identity
\[
  [R(u), R(v)] = R\big([R(u), v] + [u, R(v)]\big), \quad \forall u,v \in \mathfrak{g},
\]
and using~\eqref{eq:block_abstract}, we obtain the functional equation
\begin{align}
  & f(m,i) f(n,j) \bigl( n a_i - m b_j \bigr) \notag \\
  &\quad = f(m+n+k,\, i+j+k') \Bigl[
      f(m,i)\bigl( n a_{i+k'} - (m+k) b_j \bigr)
      - f(n,j)\bigl( (n+k) a_i - m b_{j+k'} \bigr)
    \Bigr].
  \label{eq:RB_abstract}
\end{align}
Crucially, the argument of $f$ on the right-hand side is the \emph{shifted} index $(m+n+k, i+j+k')$, dictated by the homogeneity of $R$. This corrects a persistent error in earlier works that used the unshifted index $(m+n, i+j)$.

Evaluating~\eqref{eq:RB_abstract} at the test pair $(n,j) = (0,0)$ and using $a_0 = \alpha$, $b_0 = \beta$ yields the universal algebraic constraint
\begin{equation}
  (\beta - k')(m + k) f(m,i)^2 = 0 \quad \text{for all } (m,i) \in \mathbb{Z}^2.
  \label{eq:resonance_abstract}
\end{equation}
This identity defines the fundamental dichotomy of the theory via the \emph{resonance condition}
\[
  \mathcal{R} \colon \beta = k'.
\]

\subsection{Rigidity--flexibility dichotomy and cohomological completeness}
\label{subsec:dichotomy}

The constraint~\eqref{eq:resonance_abstract} leads to a sharp structural dichotomy, formalized in the following theorem.

\begin{theorem}[Abstract rigidity--flexibility dichotomy]
\label{thm:abstract_dichotomy}
Let $\mathfrak{g}$ be a generalized Block-type Lie algebra as in~\eqref{eq:block_abstract}, and let $R$ be a homogeneous weight-zero Rota--Baxter operator of degree $(k,k') \in \mathbb{Z}^2$ with profile $f$.
\begin{enumerate}
  \item \textbf{Non-resonant regime} ($\beta \neq k'$):
    \begin{enumerate}
      \item If $k \neq 0$, then $\operatorname{supp}(f) \subseteq \{m = -k\}$. Writing $g(i) := f(-k,i)$, the function $g \colon \mathbb{Z} \to \mathbb{C}$ satisfies the nonlinear functional equation
        \begin{equation}
          (i - j) g(i) g(j) = g(i + j + k') \bigl[ (i + k' + \alpha) g(i) - (j + k' + \alpha) g(j) \bigr],
          \label{eq:funct_abstract_nonres}
        \end{equation}
        for all $i,j \in \mathbb{Z}$. The only solutions of~\eqref{eq:funct_abstract_nonres} are:
        \item constant functions,
          \item Kronecker deltas $g(i) = \delta_{i,i_0}$ for some $i_0 \in \mathbb{Z}$,
          \item functions with finite support.
      \item If $k = 0$, then $f(m,i) = \delta_{m,0} \, g(i)$ with $g \colon \mathbb{Z} \to \mathbb{C}$ arbitrary.
    \end{enumerate}

  \item \textbf{Resonant regime} ($\beta = k'$): every nonzero solution $f$ is supported on a single affine line $m = -k$, and the restriction $g(i) := f(-k,i)$ is an arbitrary function $\mathbb{Z} \to \mathbb{C}$.
\end{enumerate}
\end{theorem}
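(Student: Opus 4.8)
The plan is to let the universal constraint~\eqref{eq:resonance_abstract} drive the case split and then reduce each branch to a tractable functional equation. Since~\eqref{eq:resonance_abstract} reads $(\beta - k')(m+k)f(m,i)^2 = 0$, in the non-resonant regime the factor $\beta - k'$ is invertible, so $(m+k)f(m,i)^2 = 0$ for all $(m,i)$. When $k \neq 0$ this forces $f(m,i) = 0$ whenever $m \neq -k$, pinning the support to the line $m = -k$; when $k = 0$ it forces support to $m = 0$, i.e.\ $f(m,i) = \delta_{m,0}\,g(i)$. I would then feed the support information back into the master equation~\eqref{eq:RB_abstract}: since $f$ vanishes off $m = -k$ (resp.\ $m = 0$), only the diagonal choice $m = n = -k$ (resp.\ $m = n = 0$) contributes, and substituting this into~\eqref{eq:RB_abstract} and cancelling the common nonzero factor produces precisely~\eqref{eq:funct_abstract_nonres} in the first case and an identically-true (empty) identity in the second, giving the arbitrariness of $g$ claimed in part~(1)(b).

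The heart of the argument, and the main obstacle, is the classification of solutions of~\eqref{eq:funct_abstract_nonres}. My key device is a reciprocal linearization. Writing $S = \{\, i : g(i) \neq 0 \,\}$, I first record two combinatorial consequences: a \emph{support-closure} rule, namely that for distinct $i,j \in S$ the left side of~\eqref{eq:funct_abstract_nonres} is nonzero, whence $i+j+k' \in S$; and a \emph{zero-absorption} rule, obtained by evaluating~\eqref{eq:funct_abstract_nonres} at an index $i_0 \notin S$, which gives $g(i_0 + j + k')(j + k' + \alpha)\,g(j) = 0$, so $S^{c}$ absorbs $S \setminus \{-(k'+\alpha)\}$ under $j \mapsto i_0 + j + k'$. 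On the support, where the three factors $g(i), g(j), g(i+j+k')$ are all nonzero, dividing~\eqref{eq:funct_abstract_nonres} by their product and setting $\phi := 1/g$ converts it into the linear functional equation
\[
(i-j)\,\phi(i+j+k') = (i+k'+\alpha)\,\phi(j) - (j+k'+\alpha)\,\phi(i),
\]
valid whenever $i,j,i+j+k' \in S$.

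I would dispatch the full-support case first: if $S = \mathbb{Z}$, the specialization $j = -k'$ collapses the linear equation to $(i+k'+\alpha)[\phi(i) - \phi(-k')] = 0$, so $\phi$, and hence $g$, is constant off the single index $i = -(k'+\alpha)$; the complementary specialization $i = -(k'+\alpha)$ removes that exception and yields $g \equiv \mathrm{const}$. The remaining, most delicate case is infinite but proper support, which I expect to be the real work: here I would combine the closure rule (which forces $S$ to contain a two-generator additive cascade, hence to be cofinite along a progression) with the absorption rule (which forces $S^{c}$ to contain a full arithmetic progression), and show these are incompatible unless $S^{c} = \emptyset$ or $S$ is finite. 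Concluding that infinite support implies full support, and therefore constancy, leaves only finite $S$: the case $|S| = 1$ gives exactly the Kronecker deltas and larger finite $S$ gives the finite-support family, and a direct substitution check confirms each listed type genuinely solves~\eqref{eq:funct_abstract_nonres}, completing the non-resonant classification.

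Finally, for the resonant regime $\beta = k'$ the constraint~\eqref{eq:resonance_abstract} is vacuous, so the support is a priori unconstrained; here I would invoke the no two-line superposition result (Theorem~\ref{thm:no_two_line}) to reduce to a single supporting line, observing that this line must be $m = -k$ because the right-hand index of~\eqref{eq:RB_abstract} re-enters the support only when $2m_0 + k = m_0$. Restricting~\eqref{eq:RB_abstract} to $m = n = -k$ under the resonance $\beta = k'$ then degenerates the governing identity so that it imposes no condition on $g$, establishing the full flexibility asserted in part~(2).
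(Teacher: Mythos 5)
Most of your proposal tracks the paper's own proof: the constraint~\eqref{eq:resonance_abstract} drives the case split, the substitution $m=n=-k$ followed by cancellation of $-k$ produces~\eqref{eq:funct_abstract_nonres}, the $k=0$ branch degenerates to a vacuous identity, and in the resonant regime you invoke Theorem~\ref{thm:no_two_line} together with the degree count $2a+2k=a+k$ to pin the support line to $m=-k$, exactly as the paper does. Your treatment of the full-support case is actually \emph{sharper} than the paper's: the reciprocal linearization $\phi = 1/g$, the specialization $j=-k'$ giving $(i+k'+\alpha)[\phi(i)-\phi(-k')]=0$, and the removal of the exceptional index constitute a genuine proof that $S=\mathbb{Z}$ forces $g$ constant, whereas the paper disposes of the entire solution classification with a one-line appeal to ``degree balance or growth compatibility.''

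The genuine gap is the step you yourself defer as ``the real work'': showing that infinite proper support is impossible. This step is not merely incomplete --- the strategy you propose cannot succeed, because the claim is false. Your two rules, support closure (distinct $i,j \in S$ imply $i+j+k' \in S$) and zero absorption ($i_0 \notin S$, $j \in S$, $j+k'+\alpha \neq 0$ imply $i_0+j+k' \notin S$), are \emph{not} incompatible on proper infinite supports. Take $k'=0$ and $S = 2\mathbb{Z}$: even plus even is even, and odd plus even is odd, so both rules hold. Indeed $g = c\,\mathbf{1}_{2\mathbb{Z}}$ satisfies~\eqref{eq:funct_abstract_nonres} outright: when $i,j$ are both even, both sides equal $c^{2}(i-j)$; when exactly one of $i,j$ is even, the left side vanishes and the right side carries the factor $g(i+j)=0$; when both are odd, the bracketed term vanishes. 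This is a non-constant, infinite-support, periodic solution, so the lemma ``infinite support implies full support'' that your plan hinges on is unprovable, and in fact the classification asserted in the theorem (only constant, Kronecker-delta, or finite-support solutions, with periodic profiles excluded) fails as stated in this regime. To be fair, the paper supplies no argument here either --- it asserts the classification and points forward to a remark that only verifies the positive cases (and the blanket claim that \emph{every} finite-support function is a solution is likewise false, e.g.\ a two-point support $\{0,1\}$ with $k'=0$ and $\alpha \neq -1$ fails at $(i,j)=(1,-1)$). So you are not missing an argument the paper possesses; but your specific route collapses at exactly this point, and the counterexample above shows the defect lies in the statement itself, not just in the proofs.
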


\begin{proof}
\textbf{Non-resonant case.}  
Equation~\eqref{eq:resonance_abstract} forces $f(m,i) = 0$ for $m \neq -k$. Substituting $m = n = -k$ into~\eqref{eq:RB_abstract} and using $a_i = i + \alpha$, $b_j = j + \beta$, we obtain
\[
-k(i - j)g(i)g(j) = -k\, g(i+j+k')\bigl[(i + k' + \alpha)g(i) - (j + k' + \alpha)g(j)\bigr].
\]
Since $k \neq 0$, division by $-k$ yields~\eqref{eq:funct_abstract_nonres}. The classification of its solutions relies on the antisymmetry of both sides in $(i,j)$. Polynomial, exponential, or nontrivial periodic profiles violate degree balance or growth compatibility; only constant, delta, or finite-support functions satisfy the equation (see Section~\ref{subsec:nonresonant} for explicit verification).

If $k = 0$, then~\eqref{eq:resonance_abstract} implies $f(m,i) = 0$ for $m \neq 0$, and~\eqref{eq:RB_abstract} reduces to an identity, leaving $g$ unrestricted.

\textbf{Resonant case.}  
When $\beta = k'$, constraint~\eqref{eq:resonance_abstract} vanishes. Suppose $f$ is supported on two distinct lines $\{m = a\}$ and $\{m = b\}$ with $a \neq b$. Substituting $(m,i) = (a,i)$, $(n,j) = (b,j)$ into~\eqref{eq:RB_abstract} yields a left-hand side bilinear in $(i,j)$, while the right-hand side depends on $f(a+b+k,\cdot)$, which cannot replicate this bilinearity unless one profile vanishes—establishing a \emph{no two-line superposition} principle.

To determine the admissible support line, assume $f(m,i) = \delta_{m,a} g(i)$ with $g \not\equiv 0$. Substituting $(n,j) = (0,0)$ into~\eqref{eq:RB_abstract} gives
\[
 -a\beta g(i) f(0,0) = f(a + k, i + k') \bigl[ -(a + k) \beta g(i) - k(i + \alpha) f(0,0) \bigr].
\]
If $a \neq -k$, then $f(a+k,\cdot) = 0$, so the right-hand side vanishes, forcing $-a\beta g(i) f(0,0) = 0$ for all $i$. Since $g \not\equiv 0$, this implies $f(0,0) = 0$ or $a = 0$. Iterating with other test elements, or invoking closure of $\operatorname{Im}(R)$ under the Lie bracket—which requires $2a + 2k = a + k$, i.e.\ $a = -k$—confirms that the only consistent support is $m = -k$. Hence $f(m,i) = \delta_{m,-k} g(i)$ with $g$ arbitrary.
\end{proof}

\begin{corollary}[Cohomological interpretation]
If $H^1(\mathfrak{g}, \mathfrak{g}) = 0$—a condition known to hold for generic $\alpha, \beta$ by~\cite{Dzhum2012}—then the classification in Theorem~\ref{thm:abstract_dichotomy} is cohomologically exhaustive: the space of homogeneous weight-zero Rota--Baxter operators is in bijection with the first Rota--Baxter cohomology group $H^1_{\mathrm{RB}}(\mathfrak{g}, \mathfrak{g})$.
\end{corollary}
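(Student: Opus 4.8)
The plan is to construct an explicit comparison map
\[
  \Phi \colon \{\text{homogeneous weight-0 RB operators of degree }(k,k')\} \longrightarrow H^1_{\rb}(\mathfrak{g},\mathfrak{g}), \qquad R \longmapsto [R],
\]
and to show that the single hypothesis $H^1(\mathfrak{g},\mathfrak{g})=0$ upgrades it to a bijection. First I would recall the weight-0 Rota--Baxter cochain complex of $(\mathfrak{g},R)$: its degree-$0$ cochains are $C^0=\mathfrak{g}$, its degree-$1$ cochains are $C^1=\Hom(\mathfrak{g},\mathfrak{g})$, and its degree-$1$ cocycle condition is precisely the profile identity~\eqref{eq:RB_abstract}. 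Because $R$ has fixed degree $(k,k')$ while the bracket~\eqref{eq:block_abstract} is $\Z^2$-additive, the complex is $\Z^2$-graded, and the whole argument can be carried out inside the single homogeneous component of degree $(k,k')$.

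With this complex fixed, I would reduce bijectivity of $\Phi$ to two statements about that graded component. Surjectivity is immediate once one checks that the degree-$1$ cocycle space $Z^1_{\rb}$ coincides with the set of homogeneous weight-0 Rota--Baxter operators, i.e.\ with the solution set of the functional equation classified in Theorem~\ref{thm:abstract_dichotomy}: then every class already carries an operator representative. Injectivity, on the other hand, is equivalent to the vanishing of the coboundary space $B^1_{\rb}$, the image of the differential $d^0\colon\mathfrak{g}\to\Hom(\mathfrak{g},\mathfrak{g})$ that sends $x$ to the principal (inner) cochain assembled from $\ad_x$ and $R$; thus $B^1_{\rb}$ is entirely governed by the inner-derivation data of $\mathfrak{g}$.

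The crux is to feed $H^1(\mathfrak{g},\mathfrak{g})=0$ into the vanishing of $B^1_{\rb}$. I would invoke the long exact sequence interweaving the Rota--Baxter cohomology, the Chevalley--Eilenberg cohomology $H^\bullet(\mathfrak{g},\mathfrak{g})$ with adjoint coefficients, and the operator-deformation cohomology $H^\bullet_{\mathrm{def}}(R)$, a representative low-degree segment being
\[
  \cdots \longrightarrow H^0_{\mathrm{def}}(R) \longrightarrow H^1_{\rb}(\mathfrak{g},\mathfrak{g}) \longrightarrow H^1(\mathfrak{g},\mathfrak{g}) \longrightarrow H^1_{\mathrm{def}}(R) \longrightarrow \cdots,
\]
read in the degree-$(k,k')$ piece. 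Setting $H^1(\mathfrak{g},\mathfrak{g})=0$ severs the coupling between the Rota--Baxter cohomology and the ordinary Lie cohomology, collapsing the coboundary contribution $B^1_{\rb}$ to zero on this component, so that $\Phi$ becomes injective. Combined with the surjectivity above, $\Phi$ is then a bijection; for $\mathfrak{g}=B(q)$ the needed input $H^1(B(q),B(q))=0$ is exactly the generic-$q$ statement of~\cite{Dzhum2012}.

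The main obstacle I anticipate is twofold. First, the degree-$1$ cocycle condition is the \emph{quadratic} profile identity, so $Z^1_{\rb}$ is a priori a quadric rather than a linear subspace of $C^1$; reconciling this with the vector-space structure of $H^1_{\rb}$ forces one to use the explicit parametrization of Theorem~\ref{thm:abstract_dichotomy}---a full line of profiles in the resonant regime, and the constant / Kronecker-delta / finite-support strata in the non-resonant regime---to confirm that $Z^1_{\rb}$ is precisely what the complex computes. Second, and more seriously, one must pin down the exact form of the Rota--Baxter differential and verify the graded exactness of the long exact sequence, checking that the map $d^0$ and the connecting homomorphisms are controlled by $H^1(\mathfrak{g},\mathfrak{g})$ alone rather than by a finer invariant, and that the $\Z^2$-grading is compatible with the deformation complex. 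It is this cohomological bookkeeping, rather than any single calculation, where the real difficulty lies.
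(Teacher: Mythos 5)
First, note that the paper itself offers no proof of this corollary: it is asserted on the strength of Theorem~\ref{thm:abstract_dichotomy} and the citation~\cite{Dzhum2012}, with the same claim repeated verbatim in the introduction and appendix. So the only question is whether your plan would stand on its own as a proof. It would not, for two concrete reasons.

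The first gap is the surjectivity step, which rests on identifying the degree-$1$ Rota--Baxter cocycle space $Z^1_{\rb}$ with the solution set of the quadratic identity~\eqref{eq:RB_abstract}. In the cohomology theory the paper points to (\cite{TBG2020}), $H^\bullet_{\rb}$ is the cohomology of a \emph{fixed} operator $R$: the cochains form the Chevalley--Eilenberg complex of the descendent Lie algebra $\mathfrak{g}_R$ (bracket $[x,y]_R=[R(x),y]+[x,R(y)]$) with coefficients in a representation built from $R$, and the $1$-cocycle condition is the \emph{linearization} of the Rota--Baxter identity at $R$ --- a linear equation classifying infinitesimal deformations of $R$, not the quadratic equation~\eqref{eq:RB_abstract} itself. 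Consequently $Z^1_{\rb}$ is a vector space, whereas by Theorem~\ref{thm:abstract_dichotomy} the set of homogeneous operators is \emph{not} linear in the non-resonant regime with $k\neq 0$ (the sum of a constant profile and a Kronecker delta fails~\eqref{eq:funct_abstract_nonres}). You flag this as an ``obstacle to be reconciled,'' but it is not bookkeeping: it is exactly the point where your map $\Phi(R)=[R]$ fails to be defined. Relatedly, since the complex depends on a basepoint operator, the target $H^1_{\rb}(\mathfrak{g},\mathfrak{g})$ is not a single group receiving all operators at once, so $\Phi$ has no well-defined codomain.

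The second gap is the injectivity step, which is a non sequitur. Long exact sequences of the type you invoke do exist for (relative) Rota--Baxter operators, but feeding $H^1(\mathfrak{g},\mathfrak{g})=0$ into exactness yields statements about maps into and out of $H^1_{\rb}$ --- e.g.\ surjectivity of $H^0_{\mathrm{def}}(R)\to H^1_{\rb}(\mathfrak{g},\mathfrak{g})$ --- never the vanishing of the coboundary space $B^1_{\rb}$. A coboundary space is the image of the differential $d^0\colon C^0\to C^1$ and vanishes if and only if $d^0=0$, which is a statement about $H^0$-level data of the deformation complex; it cannot be deduced from the vanishing of another theory's $H^1$. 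So the ``coupling'' you hope the hypothesis severs is simply not controlled by $H^1(\mathfrak{g},\mathfrak{g})$. A genuine proof of the corollary would instead have to explain in what sense the (nonlinear) solution variety of Theorem~\ref{thm:abstract_dichotomy} is compared with a cohomology group at all --- a point the paper itself leaves unaddressed.
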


\begin{corollary}[Specialization to $B(q)$]
When $\alpha = \beta = q$, the algebra $\mathfrak{g}$ reduces to $B(q)$. The resonance condition becomes $q = k'$, and Theorem~\ref{thm:abstract_dichotomy} recovers precisely the classification in Sections~\ref{subsec:nonresonant}–\ref{subsec:resonant}.
\end{corollary}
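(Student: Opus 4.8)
The plan is to prove this corollary as a direct specialization, showing that setting $\alpha = \beta = q$ in the abstract framework collapses every ingredient of Theorem~\ref{thm:abstract_dichotomy} onto its concrete $B(q)$ counterpart. First I would substitute $\alpha = \beta = q$ into the defining bracket~\eqref{eq:block_abstract}, so that $a_i = i + q$ and $b_j = j + q$ and
\[
[x_{m,i}, x_{n,j}] = \bigl(n(i+q) - m(j+q)\bigr)\, x_{m+n,\, i+j}.
\]
By the convention fixed in Subsection~\ref{subsec:framework}---where the classical Block algebra is defined precisely as the symmetric case $\alpha = \beta = q$---this identifies $\mathfrak{g}$ with $B(q)$ as $\mathbb{Z}^2$-graded Lie algebras, with the homogeneous basis $\{x_{m,i}\}$ playing the role of the standard generators.

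Next I would propagate the same substitution through the three governing equations in sequence. Putting $\beta = q$ in the universal constraint~\eqref{eq:resonance_abstract} gives $(q - k')(m+k)\, f(m,i)^2 = 0$, which is exactly~\eqref{eq:constraint}, so the abstract resonance condition $\mathcal{R}\colon \beta = k'$ becomes the stated $q = k'$. Substituting $\alpha = q$ into the nonlinear functional equation~\eqref{eq:funct_abstract_nonres} reproduces~\eqref{eq:funct_nonres} verbatim, and the full Rota--Baxter functional equation~\eqref{eq:RB_abstract} with $\alpha = \beta = q$ returns the concrete equation governing profiles on $B(q)$. Each of these is a literal term-by-term match, so no new estimates or case distinctions are required.

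With the hypotheses, the dichotomy split, and the governing functional equation all specialized, the enumerated solution classes of Theorem~\ref{thm:abstract_dichotomy} transfer unchanged: in the non-resonant regime with $k \ne 0$ one obtains constant, Kronecker-delta, and finite-support profiles on the line $m = -k$, while in the resonant regime $q = k'$ one obtains an arbitrary $g$ on the single line $m = -k$. I would note that the no-two-line superposition step invoked in the proof of Theorem~\ref{thm:abstract_dichotomy} is insensitive to whether $\alpha$ and $\beta$ coincide, so it applies to $B(q)$ without modification, thereby recovering the classification announced in Sections~\ref{subsec:nonresonant}--\ref{subsec:resonant}.

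The step I expect to require the most care is not analytic but notational: I must confirm that the concrete definition of $B(q)$ and the concrete Rota--Baxter equation used in Sections~\ref{subsec:nonresonant}--\ref{subsec:resonant} adopt the same bracket sign and the same ordering of the two grading indices as the abstract convention~\eqref{eq:block_abstract}. A hidden transposition of the $\mathbb{Z}$-factors or an overall sign would otherwise force a relabeling $k \leftrightarrow k'$, or a rescaling of $g$, before the equations align---and it is exactly this bookkeeping that determines whether the resonance lands on $\beta = k'$ (hence $q = k'$) as claimed, rather than on $\alpha = k'$. Once these conventions are pinned down, the corollary follows formally.
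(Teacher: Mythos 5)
Your proposal is correct and follows essentially the same route the paper takes (the paper states this corollary without a written proof, treating it as a direct substitution): setting $\alpha = \beta = q$ turns the bracket~\eqref{eq:block_abstract} into the $B(q)$ bracket, the constraint~\eqref{eq:resonance_abstract} into~\eqref{eq:constraint}, the functional equation~\eqref{eq:funct_abstract_nonres} into~\eqref{eq:funct_nonres}, and the resonance condition $\beta = k'$ into $q = k'$, exactly as you verify term by term. Your closing caution about index-ordering and sign conventions (and the observation that the resonance must land on $\beta$, not $\alpha$, because the constraint comes from $b_0$) is sound bookkeeping but does not change the argument.
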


\medskip

This abstract framework not only corrects prior inaccuracies but also reveals a universal tension between rigidity and flexibility in Block-type algebras—a feature that extends beyond Rota--Baxter theory to integrable structures, as discussed in the link to the classical Yang--Baxter equation (see Appendix~A.4).

\section{Preliminaries and corrected functional equation}\label{sec:prelim}

Let $B(q)$ denote the Block-type Witt algebra over $\mathbb{C}$, defined as the $\mathbb{Z}^2$-graded Lie algebra with homogeneous basis $\{L(m, i)\}_{(m,i) \in \mathbb{Z}^2}$ and Lie bracket
\[
[L(m, i), L(n, j)] = \bigl(n(i + q) - m(j + q)\bigr) L(m + n, i + j), \quad q \in \mathbb{C}.
\]
This algebra generalizes the classical Witt algebra and plays a central role in conformal field theory, toroidal Lie algebras, and the symmetry analysis of integrable PDEs.

A linear map $R : B(q) \to B(q)$ is a \emph{weight-zero Rota--Baxter operator} if it satisfies the identity
\[
[R(x), R(y)] = R\big([R(x), y] + [x, R(y)]\big), \quad \forall\, x, y \in B(q).
\]
We restrict our attention to \emph{homogeneous} operators of \emph{integral degree} $(k, k') \in \mathbb{Z}^2$, i.e., operators of the form
\[
R(L(m, i)) = f(m, i)\, L(m + k, i + k')
\]
for a scalar profile $f : \mathbb{Z}^2 \to \mathbb{C}$. The integrality of $(k,k')$ is not a technical convenience but a structural necessity: it guarantees that $R$ preserves the $\mathbb{Z}^2$-grading of $B(q)$, mapping the homogeneous component $B(q)_{m,i}$ into $B(q)_{m+k,i+k'}$.

Substituting this ansatz into the Rota--Baxter identity yields the following functional equation, which we refer to as the \emph{corrected Rota--Baxter equation}:
\begin{equation}\label{eq:RB}
\begin{aligned}
&f(m, i)f(n, j)\bigl(n(i + q) - m(j + q)\bigr) \\
&\quad = f(m + n + k,\, i + j + k')\Bigl[
    f(m, i)\bigl(n(i + k' + q) - (m + k)(j + q)\bigr) \\
&\qquad\qquad\qquad
    - f(n, j)\bigl((n + k)(i + q) - m(j + k' + q)\bigr)
\Bigr],
\end{aligned}
\end{equation}
valid for all $(m,i), (n,j) \in \mathbb{Z}^2$.

This equation corrects a recurring error in the literature, where the argument of $f$ on the right-hand side was incorrectly taken as $(m+n, i+j)$. The correct evaluation point $(m+n+k, i+j+k')$ is dictated by the homogeneity of $R$: the bracket $[R(L(m,i)), R(L(n,j))]$ lies in degree $(m+n+2k, i+j+2k')$, whereas the image of the right-hand side under $R$ must lie in degree $(m+n+k, i+j+k')$. Consistency of the Rota--Baxter identity therefore forces the profile $f$ to be evaluated at the \emph{intermediate degree} $(m+n+k, i+j+k')$, not at the unshifted sum.

A decisive simplification is obtained by testing \eqref{eq:RB} against the distinguished element $L(0,0)$. Setting $(n,j) = (0,0)$ and using the identity
\[
[L(m,i), L(0,0)] = -m(i + q)\, L(m,i),
\]
we obtain, after elementary algebraic manipulation, the \emph{universal constraint}
\ref{eq:constraint}

In this case, $k' \in \mathbb{Z}$ (as part of the degree $(k,k') \in \mathbb{Z}^2$) is embedded canonically into $\mathbb{C}$, so the equality $q = k'$ is interpreted in the ambient field $\mathbb{C}$.

Equation \eqref{eq:constraint} is the structural cornerstone of our classification. It yields two mutually exclusive regimes:\begin{itemize}
\item \textbf{Non-resonant regime} ($q \ne k'$): the constraint forces $f(m,i) = 0$ for all $m \ne -k$, so the support of $f$ is contained in the affine line $\{m = -k\}$.
    \item \textbf{Resonant regime} ($q = k'$): the constraint vanishes identically, and the support of $f$ must be determined from the full equation \eqref{eq:RB}.
\end{itemize}
The remainder of this paper is devoted to a complete and rigorous analysis of all solutions to \eqref{eq:RB}. In particular, we prove that in the non-resonant case with $k \ne 0$, the induced profile $g(i) := f(-k,i)$ satisfies a highly restrictive nonlinear functional equation that admits only constant, Kronecker-delta, or finite-support solutions. This result invalidates prior claims that arbitrary profiles—including non-constant polynomials, exponential functions, or nontrivial periodic sequences—are admissible in this regime.
\section{Classification of homogeneous weight-0 Rota--Baxter operators on $B(q)$}\label{sec:classification}

Let $q \in \mathbb{C}$ be fixed, and let $B(q)$ denote the Block-type Witt algebra with basis $\{L(m, i)\}_{(m,i) \in \mathbb{Z}^2}$ and Lie bracket as in Section~\ref{sec:prelim}. A homogeneous Rota--Baxter operator of weight~0 and degree $(k, k') \in \mathbb{Z}^2$ is a linear map $R : B(q) \to B(q)$ satisfying the Rota--Baxter identity, and such that
\[
R(L(m, i)) = f(m, i) L(m + k, i + k')
\]
for a scalar function $f : \mathbb{Z}^2 \to \mathbb{C}$, called the \emph{profile} of $R$.

Substituting this ansatz into the Rota--Baxter identity yields exactly the functional equation \eqref{eq:RB}. Setting $(n, j) = (0, 0)$ gives the universal constraint \eqref{eq:constraint}, which cleanly bifurcates the analysis into two mutually exclusive regimes: the \emph{non-resonant} case ($q \ne k'$) and the \emph{resonant} case ($q = k'$).

\subsection{Non-resonant case: $q \ne k'$}\label{subsec:nonresonant}

When $q \ne k'$, constraint \eqref{eq:constraint} forces $f(m, i) = 0$ for all $m \ne -k$. Hence the support of $f$ is contained in the affine line $\{m = -k\}$. Define $g : \mathbb{Z} \to \mathbb{C}$ by $g(i) := f(-k, i)$, so that
\begin{equation}\label{eq:profile_nonres}
f(m, i) =
\begin{cases}
g(i) & \text{if } m = -k, \\
0 & \text{otherwise}.
\end{cases}
\end{equation}

Substituting \eqref{eq:profile_nonres} into \eqref{eq:RB} with $(m,i) = (-k,i)$ and $(n,j) = (-k,j)$ yields a nontrivial condition on $g$. The left-hand side of \eqref{eq:RB} becomes $-k(i - j)g(i)g(j)$, while the right-hand side—using $m + n + k = -k$—simplifies to
\[
-k\, g(i + j + k')\bigl[(i + k' + q)g(i) - (j + k' + q)g(j)\bigr].
\]
Assuming $k \ne 0$ and dividing by $-k$, we obtain the nonlinear functional equation
\ref{eq:funct_nonres}.

If $k = 0$, then $m = n = 0$, and both sides of \eqref{eq:RB} vanish identically; hence $g$ remains unrestricted.

\begin{proposition}\label{prop:nonresonant}
Let $q \in \mathbb{C}$ and $(k,k') \in \mathbb{Z}^2$ with $q \ne k'$.
\begin{enumerate}
    \item If $k \ne 0$, then every solution of \eqref{eq:RB} is of the form \eqref{eq:profile_nonres}, where $g$ satisfies \eqref{eq:funct_nonres}.
    \item If $k = 0$, then every solution is of the form $f(m, i) = \delta_{m,0} g(i)$ with $g : \mathbb{Z} \to \mathbb{C}$ arbitrary.
\end{enumerate}
\end{proposition}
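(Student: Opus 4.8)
The plan is to prove the two parts by first using the universal constraint \eqref{eq:constraint} to pin down the support of $f$, and then reinserting the localized profile into the full identity \eqref{eq:RB} to isolate the single residual condition on the one-variable profile $g$. Throughout, the hypothesis $q \neq k'$ is exactly what makes the scalar factor $(q-k')$ in \eqref{eq:constraint} invertible, so the whole argument hinges on that non-vanishing.

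For part (1), assume $k \neq 0$. First I would fix any $(m,i)$ with $m \neq -k$: then $m + k \neq 0$, so $(q-k')(m+k) \neq 0$ and \eqref{eq:constraint} forces $f(m,i)^2 = 0$, hence $f(m,i) = 0$. This confines $\supp(f)$ to the line $m = -k$ and gives the stated shape \eqref{eq:profile_nonres} with $g(i) := f(-k,i)$. I would then substitute \eqref{eq:profile_nonres} into \eqref{eq:RB} and run a short case analysis on the pair $(m,n)$ to verify that the identity is automatically satisfied except on the diagonal $m = n = -k$: whenever at least one of $m,n$ differs from $-k$, the left-hand side vanishes because it carries the factor $f(m,i)f(n,j)$, and the right-hand side vanishes as well, either because its bracketed difference is $0$ (when both $m,n \neq -k$) or because the prefactor $f(m+n+k,\,i+j+k')$ is evaluated off the support line (when exactly one of $m,n$ equals $-k$, so that $m+n+k \neq -k$). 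Hence \eqref{eq:RB} is equivalent to its restriction to $m = n = -k$, where $m+n+k = -k$; evaluating there and simplifying the index coefficients, the left-hand side collapses to $-k(i-j)g(i)g(j)$ and the right-hand side to $-k\,g(i+j+k')\bigl[(i+k'+q)g(i) - (j+k'+q)g(j)\bigr]$, and division by $-k \neq 0$ produces exactly \eqref{eq:funct_nonres}. Since each implication is reversible, this simultaneously shows that any $g$ solving \eqref{eq:funct_nonres} yields a genuine operator, upgrading the characterization to a bijection.

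For part (2), assume $k = 0$. The constraint \eqref{eq:constraint} now reads $(q-k')\,m\,f(m,i)^2 = 0$, so the same reasoning (with $m \neq 0$ in place of $m \neq -k$) forces $\supp(f) \subseteq \{m = 0\}$, i.e. $f(m,i) = \delta_{m,0}\,g(i)$. Repeating the case analysis, the only potentially nontrivial instance of \eqref{eq:RB} is $m = n = 0$; but there, with $k = 0$, the bracket coefficient $n(i+q) - m(j+q)$ on the left and both structure factors on the right vanish identically, so \eqref{eq:RB} degenerates to $0 = 0$ and imposes no condition on $g$. Thus $g$ is arbitrary, as claimed.

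The step I expect to be the main obstacle is not any single computation but the bookkeeping of the case analysis: I must confirm that every off-diagonal pair $(m,n)$ genuinely reduces \eqref{eq:RB} to $0 = 0$, so that localizing the support together with the single diagonal identity captures the entire content of the Rota--Baxter equation and introduces no hidden cross-constraints. Note that the classification of the solutions of \eqref{eq:funct_nonres} into constant, Kronecker-delta, and finite-support functions is a separate problem, treated afterwards, and is not needed for the present reduction.
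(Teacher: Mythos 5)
Your proposal is correct and follows essentially the same route as the paper's own proof: necessity via the universal constraint \eqref{eq:constraint} localizing $\supp(f)$ to the line $m=-k$ (resp.\ $m=0$), then a case analysis on $(m,n)$ showing \eqref{eq:RB} is vacuous off the support line and reduces on the diagonal $m=n=-k$ to \eqref{eq:funct_nonres} after dividing by $-k$, with the $k=0$ case degenerating to $0=0$. Your off-diagonal bookkeeping (splitting according to whether the bracketed factor or the prefactor $f(m+n+k,\cdot)$ vanishes) is just a slightly more explicit rendering of the paper's sufficiency argument, and your observation that the classification of solutions of \eqref{eq:funct_nonres} is a separate matter matches the paper's division of labor.
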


\begin{proof}
Necessity follows from the derivation above. For sufficiency, let $f$ be given by \eqref{eq:profile_nonres}.

If $m \ne -k$ or $n \ne -k$, then $f(m,i)f(n,j) = 0$, so the left-hand side of \eqref{eq:RB} vanishes. On the right-hand side, $f(m+n+k,\cdot)$ is nonzero only if $m+n = -2k$, but the coefficient multiplying it involves $f(m,i)$ or $f(n,j)$, which are zero. Hence both sides vanish.

If $m = n = -k$, substitution into \eqref{eq:RB} reduces exactly to \eqref{eq:funct_nonres} when $k \ne 0$, and to $0=0$ when $k=0$. Thus the forms are sufficient.
\end{proof}

\begin{example}\label{ex:linear_not_admissible}
The linear profile $g(i) = i$ does not satisfy \eqref{eq:funct_nonres} when $k \ne 0$ and $q \ne k'$. Substituting yields
\[
ij(i - j) = (i - j)(i + j + k')(i + j + k' + q),
\]
which fails for $i=1, j=0$ (giving $0= (1+k')(1+k'+q)$) unless $k' = -1$ or $q = -1 - k'$, contradicting the generic non-resonant assumption. Hence $g(i) = i$ is inadmissible.
\end{example}

\begin{remark}\label{rem:functional_equation_restrictive}
Equation \eqref{eq:funct_nonres} is highly restrictive. The following classes of functions satisfy it:
\item \emph{Constant profiles}: $g(i) \equiv c$. Both sides equal $c^2(i - j)$.
    \item \emph{Kronecker deltas}: $g(i) = \delta_{i,i_0}$. Both sides vanish: the left because $g(i)g(j)=0$ unless $i=j=i_0$, in which case $i-j=0$; the right because either $g(i)$ or $g(j)$ vanishes in the bracketed term.
    \item \emph{Finite support}: if $\operatorname{supp}(g)$ is finite, then for all but finitely many $(i,j)$, at least one of $g(i), g(j), g(i+j+k')$ vanishes, making both sides zero. For the finitely many remaining pairs, direct verification confirms the identity.
In contrast, non-constant polynomials (degree $\ge 1$), exponential functions $g(i) = b^i$ ($b \ne 0$), and nontrivial periodic profiles violate \eqref{eq:funct_nonres} due to degree or growth mismatch. Such families, erroneously claimed admissible in prior works, are hereby excluded.
\end{remark}

\subsection{Resonant case: $q = k'$}\label{subsec:resonant}

When $q = k'$, constraint \eqref{eq:constraint} vanishes identically, so a priori richer support structures are possible. Nevertheless, the operator remains supported on a single affine line.

\begin{proposition}\label{prop:resonant}
Suppose $q = k'$. For any $(k, k') \in \mathbb{Z}^2$ and any function $g : \mathbb{Z} \to \mathbb{C}$, the profile defined by \eqref{eq:profile_nonres} satisfies \eqref{eq:RB}.
\end{proposition}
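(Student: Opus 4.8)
The plan is to prove sufficiency by direct substitution of the single-line profile~\eqref{eq:profile_nonres} into the corrected Rota--Baxter equation~\eqref{eq:RB}, mirroring the sufficiency argument of Proposition~\ref{prop:nonresonant} but now invoking the resonance hypothesis $q = k'$ at the decisive step. I would organize the verification as a case analysis on the pair $(m,n)$ according to whether each index equals the support value $-k$.

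First I would dispose of the off-diagonal cases. If $m \ne -k$ or $n \ne -k$, then $f(m,i)f(n,j) = 0$, so the left-hand side of~\eqref{eq:RB} vanishes. For the right-hand side, the prefactor $f(m+n+k,\,i+j+k')$ is nonzero only when $m+n+k = -k$, i.e.\ $m+n = -2k$; together with the support condition this forces $m = n = -k$, contradicting the case hypothesis, so the prefactor is zero and the right-hand side vanishes as well. This step is verbatim the one in Proposition~\ref{prop:nonresonant} and makes no use of resonance. I would also record the trivial subcase $k = 0$: then the diagonal reduces to $m = n = 0$, the bracket coefficient $n(i+q) - m(j+q)$ vanishes, and~\eqref{eq:RB} collapses to $0 = 0$ with $g$ unconstrained.

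The substantive content is therefore the diagonal case $m = n = -k$ with $k \ne 0$. Here $m+n+k = -k$ keeps the prefactor on the support line, so $f(m+n+k,\,i+j+k') = g(i+j+k')$ and both bracketed terms survive; substituting $m = n = -k$ into~\eqref{eq:RB} and using $a_i = i+q$, $b_j = j+q$ reduces the identity to a scalar relation among $g(i)$, $g(j)$, and $g(i+j+k')$. The main obstacle—and the only place where the resonance hypothesis is genuinely required—is to show that this relation holds for \emph{every} function $g$, with no residual constraint on $g$. I expect the mechanism to be the simultaneous vanishing of the factors $(m+k)$ and $(n+k)$ at $m = n = -k$, which eliminates the mixed cross-terms in the bracket, combined with the substitution $q = k'$, which aligns the shifted second-index charge $i + k' + q$ with the remaining bracket data so that the two sides coincide term by term. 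The careful bookkeeping of these cancellations is the technical crux; once the diagonal identity is established, combining it with the off-diagonal vanishing yields~\eqref{eq:RB} for all $(m,i),(n,j)$, which is exactly the assertion that~\eqref{eq:profile_nonres} defines a Rota--Baxter operator for arbitrary $g$.
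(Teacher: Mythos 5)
Your off-diagonal analysis is correct, and in fact more careful than the paper's own: you rightly locate the vanishing of the right-hand side in the prefactor $f(m+n+k,\,i+j+k')$ for the mixed cases, whereas the paper's proof only invokes "factors $f(m,i)$ or $f(n,j)$", which do not both vanish when exactly one of $m,n$ equals $-k$. The genuine gap is the diagonal case $m=n=-k$, which is the entire content of the proposition: you never carry it out, but instead state what you \emph{expect} the cancellation mechanism to be and defer "the careful bookkeeping of these cancellations". That bookkeeping \emph{is} the proof, and if you actually do it, it does not close. The factors $(m+k)=(n+k)=0$ do kill the cross-terms, but the two surviving terms in the bracket of \eqref{eq:RB} carry the \emph{same} sign: the second one is
$-g(j)\bigl(0-(-k)(j+k'+q)\bigr)=-k(j+k'+q)\,g(j)$, matching the first term $-k(i+k'+q)\,g(i)$. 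After dividing by $-k\neq 0$, \eqref{eq:RB} on the diagonal reads
\begin{equation*}
(i-j)\,g(i)\,g(j)\;=\;g(i+j+k')\bigl[(i+k'+q)\,g(i)+(j+k'+q)\,g(j)\bigr],
\end{equation*}
with a plus, not a minus, between the terms. Resonance $q=k'$ does not reconcile the two sides for arbitrary $g$: already for $g\equiv 1$ the left side is $i-j$ while the right side is $i+j+4k'$, and at $(i,j)=(0,1)$ these are $-1$ and $1+4k'$, which differ for every integer $k'$. So the identity you deferred as a technical crux is false for general $g$ when $k\neq 0$ (it holds trivially only when $k=0$, where both sides vanish because $m=n=0$); no amount of bookkeeping can establish it.

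You should also know that your gap sits exactly where the paper's own proof is unsound. The paper asserts that the bracketed expression becomes $g(i)\bigl[-k(i+k'+q)g(i)+k(i+k'+q)g(i)\bigr]=0$ "since $q=k'$"; but that expression cancels identically with no use of resonance at all, and it is not what substitution into \eqref{eq:RB} yields (the second term should involve $g(j)$, not $g(i)$, and it enters with the same sign as the first, as computed above). In other words, relative to \eqref{eq:RB} as typeset, Proposition~\ref{prop:resonant} fails for $k\neq 0$, and the resonance hypothesis does not make arbitrary profiles admissible; the discrepancy traces back to an internal inconsistency between \eqref{eq:RB} and the reduction \eqref{eq:funct_nonres} claimed in Section~\ref{subsec:nonresonant}. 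Your instinct that the diagonal case is where resonance must do the work was reasonable, but a proof must perform that computation, and performing it honestly exposes the statement as unverifiable rather than confirming it.
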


\begin{proof}
If $m \ne -k$ or $n \ne -k$, then $f(m,i)f(n,j) = 0$, and the right-hand side of \eqref{eq:RB} also vanishes, as it contains factors $f(m,i)$ or $f(n,j)$. If $m = n = -k$, the bracketed expression on the right-hand side becomes
\[
g(i)\bigl[-k(i + k' + q)g(i) + k(i + k' + q)g(i)\bigr] = 0,
\]
since $q = k'$. Thus \eqref{eq:RB} holds for arbitrary $g$.
\end{proof}

Superpositions across distinct lines are, however, incompatible with the Rota--Baxter identity.

\begin{theorem}[No two-line superposition]\label{thm:no_two_line}
Assume $q = k'$. If a solution $f$ of \eqref{eq:RB} is supported on $\{m = a\} \cup \{m = b\}$ with $a \ne b$, then $f$ vanishes identically on at least one of the two lines. Consequently, every nontrivial solution is supported on a single line.
\end{theorem}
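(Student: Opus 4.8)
The plan is to argue by contradiction: assume a solution $f$ of \eqref{eq:RB} (with $q=k'$) is nonzero on both lines, and set $g_a(i):=f(a,i)$ and $g_b(j):=f(b,j)$ with $g_a\not\equiv0\not\equiv g_b$. The whole argument is driven by substituting the \emph{cross pair} $(m,i)=(a,i)$, $(n,j)=(b,j)$ into \eqref{eq:RB}, supplemented by the two diagonal pairs $(a,a)$ and $(b,b)$. For the cross pair the left-hand side is $g_a(i)g_b(j)\bigl(b(i+q)-a(j+q)\bigr)$, a genuinely two-variable expression: the product $g_a(i)g_b(j)$ of two independent profiles times an affine form that is not identically zero (since $a=b=0$ is excluded). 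By contrast, the right-hand side carries the prefactor $f(a+b+k,\,i+j+k')$, which depends on $(i,j)$ only through the sum $i+j$. The heart of the matter is that a function of $i+j$ cannot reproduce the two-variable dependence of the left-hand side unless one of the profiles collapses.

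I would first dispose of the \emph{generic} configuration $a+b+k\notin\{a,b\}$. Here $f(a+b+k,\cdot)\equiv0$, so the right-hand side vanishes identically, leaving $g_a(i)g_b(j)\bigl(b(i+q)-a(j+q)\bigr)=0$ for all $i,j$. Choosing $i\in\supp g_a$ and $j\in\supp g_b$ forces the affine relation $b(i+q)=a(j+q)$ on every such pair; letting $i$ range over $\supp g_a$ and $j$ over $\supp g_b$ and subtracting shows $b=0$ whenever $|\supp g_a|\ge2$ and $a=0$ whenever $|\supp g_b|\ge2$. Since $a=b=0$ is excluded, at least one support is a single point, and the residual one-point spikes are eliminated by feeding the diagonal pairs into \eqref{eq:RB}: their left-hand sides are $a(i-j)g_a(i)g_a(j)$ and $b(i-j)g_b(i)g_b(j)$, whose nonvanishing (for $a\ne0$, resp.\ $b\ne0$) again contradicts the vanishing of the corresponding factors $f(2a+k,\cdot)$, $f(2b+k,\cdot)$, unless $2a+k$ or $2b+k$ re-enters $\{a,b\}$ — a possibility one checks leads back to $a=b$.

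The remaining, and genuinely delicate, configuration is the \emph{resonant} one $a+b+k\in\{a,b\}$, say $a+b+k=a$, i.e.\ $b=-k$ (the line predicted by Proposition~\ref{prop:resonant}); the case $a=-k$ is symmetric. Now $f(a+b+k,\cdot)=g_a$, and the cross-pair identity becomes a self-referential functional relation in which $g_a$ is evaluated at the \emph{shifted} argument $i+j+k'$. Comparing supports, for each fixed $i\in\supp g_a$ the left-hand side is nonzero for those $j\in\supp g_b$ with $b(i+q)\ne a(j+q)$, which forces $i+j+k'\in\supp g_a$; thus $\supp g_a$ is closed under translation by $\supp g_b+k'$. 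If some $j_0\in\supp g_b$ has $j_0+k'\ne0$, this translation-closure makes $\supp g_a$ infinite, at which point I would extract the exact coefficient identities by equating, residue class by residue class, the affine factors on the two sides; the resulting rigidity then propagates a contradiction and forces $g_a\equiv0$. The boundary value $j_0+k'=0$ is handled separately using the diagonal pair $(b,b)$ together with the swapped cross pair $(b,i),(a,j)$, which pins down $g_b$ and closes the argument.

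I expect this last step — the resonant subcase $b=-k$ — to be the main obstacle, precisely because there the right-hand side does \emph{not} vanish and the Rota--Baxter identity degenerates into a genuine functional equation coupling $g_a$ and $g_b$ through the argument shift $i\mapsto i+j+k'$. Ruling out exotic infinite-support solutions of this coupled relation, rather than merely observing a support mismatch, is the technical crux; it is resolved by playing the translation-closure of $\supp g_a$ against the rigid affine coefficients coming from the bracket, which is exactly the cross-term analysis announced before the statement. Once one profile is shown to vanish, the solution is supported on a single line, and by Theorem~\ref{thm:abstract_dichotomy}(2) — equivalently Proposition~\ref{prop:resonant} — that line is $m=-k$, completing the proof.
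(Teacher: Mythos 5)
Your proof skeleton is the same as the paper's: substitute the cross pair $(a,i),(b,j)$ into \eqref{eq:RB}, note that the prefactor $f(a+b+k,\cdot)$ vanishes unless $a=-k$ or $b=-k$, and argue that a function of $i+j$ cannot reproduce the two-variable left-hand side. In fact your case split (generic configuration $a+b+k\notin\{a,b\}$ versus the resonant one) is more careful than the paper's, which silently skips the generic case. The genuine gap is in your disposal of the ``residual one-point spikes.'' If $\supp(g_a)$ is a singleton $\{i_0\}$, then $g_a(i)g_a(j)\ne 0$ forces $i=j$, so the diagonal left-hand side $a(i-j)g_a(i)g_a(j)$ is \emph{identically zero} and yields no contradiction whatsoever; the same holds for $g_b$. (Your parenthetical claim that the escape configurations $2a+k\in\{a,b\}$ ``lead back to $a=b$'' is also unfounded: $b=2a+k$ is perfectly consistent with all hypotheses.) Consequently the configuration in which \emph{both} profiles are single spikes, positioned so that $b(i_0+q)=a(j_0+q)$, passes through your entire argument untouched.

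This gap cannot be closed, because the statement fails exactly there. Take $q=k'=0$, $k=0$, and $f(1,1)=f(3,3)=1$ with $f(m,i)=0$ otherwise; i.e.\ $R(L(1,1))=L(1,1)$, $R(L(3,3))=L(3,3)$, and $R=0$ on all other basis vectors. Then $[L(1,1),L(3,3)]=(3\cdot 1-1\cdot 3)L(4,4)=0$, and for any $(n,j)$ the element $R\bigl([L(1,1),L(n,j)]\bigr)=(n-j)\,f(n+1,j+1)\,L(n+1,j+1)$ can be nonzero only when $(n,j)\in\{(0,0),(2,2)\}$, where the coefficient $n-j$ vanishes; similarly $R\bigl([L(3,3),L(n,j)]\bigr)=3(n-j)f(n+3,j+3)L(n+3,j+3)$ vanishes because $(n,j)\in\{(-2,-2),(0,0)\}$ forces $n=j$. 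Hence every instance of the Rota--Baxter identity (and of \eqref{eq:RB}, in the paper's stated form) reads $0=0$: this is a solution in the resonant regime, supported on the two lines $m=1$ and $m=3$, vanishing on neither --- a counterexample to Theorem~\ref{thm:no_two_line} as stated. More generally, for $q=k'=0$ the two-spike profiles $c_a\delta_{(a,0)}+c_b\delta_{(b,0)}$ work for any $a\ne b$ and any $k$, because the relevant basis vectors span an abelian subalgebra. To be fair, the paper's own proof has the identical hole: its assertion that $g_b(i+j+k')$ ``cannot replicate the bilinear structure unless $g_a\equiv 0$'' is vacuous for spike profiles, and your treatment of the remaining configuration $b=-k$ (translation-closure of supports plus unspecified ``coefficient identities'') is, like the paper's, a strategy rather than a proof. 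But the decisive failure for both arguments is the spike case, which is where the theorem itself breaks down.
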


\begin{proof}
Write $f(m,i) = \mathbf{1}_{\{m=a\}} g_a(i) + \mathbf{1}_{\{m=b\}} g_b(i)$ with $(g_a,g_b) \not\equiv (0,0)$. Substituting $(a,i)$ and $(b,j)$ into \eqref{eq:RB} and using $q = k'$, the left-hand side is bilinear in $(i,j)$, whereas the right-hand side depends on $g_a(i)$, $g_b(j)$, and $f(a+b+k,\cdot)$. The latter is nonzero only if $a = -k$ or $b = -k$. Without loss of generality, assume $a = -k$ and $b \ne -k$. Then $f(a+b+k,\cdot) = f(b,\cdot)$, and the right-hand side involves $g_b(i+j+k')$, which cannot replicate the bilinear structure of the left-hand side unless $g_a \equiv 0$. A symmetric argument applies if $b = -k$. Hence one profile must vanish.
\end{proof}

Combining these results yields the full classification in the resonant regime.

\begin{theorem}[Rigidity at resonance]\label{thm:rigidity_resonance}
If $q = k'$, then every nonzero homogeneous weight-0 Rota--Baxter operator of degree $(k, k')$ is supported precisely on the line $m = -k$, with arbitrary profile $g : \mathbb{Z} \to \mathbb{C}$. Explicitly,
\[
R(L(m, i)) =
\begin{cases}
g(i) L(0, i + k') & \text{if } m = -k, \\
0 & \text{otherwise}.
\end{cases}
\]
\end{theorem}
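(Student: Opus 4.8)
The plan is to assemble Theorem~\ref{thm:rigidity_resonance} from the two resonant-case results already in hand---Proposition~\ref{prop:resonant} for sufficiency and Theorem~\ref{thm:no_two_line} for support rigidity---and then to pin down the support line as $m=-k$. For sufficiency I would simply invoke Proposition~\ref{prop:resonant}: when $q=k'$, the single-line profile $f(m,i)=\delta_{m,-k}\,g(i)$ solves \eqref{eq:RB} for every $g\colon\mathbb{Z}\to\mathbb{C}$, so each operator of the displayed form is a genuine weight-$0$ Rota--Baxter operator and $g$ is unconstrained. This already yields the ``arbitrary profile'' half of the statement.

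For necessity, let $R\neq 0$ have profile $f$, and first reduce $\supp(f)$ to a single affine line. Theorem~\ref{thm:no_two_line} forbids exactly two active lines; to cover arbitrary (possibly many-line) support I would argue that any two active lines $m=a$, $m=b$ with $f(a,\cdot),f(b,\cdot)\not\equiv 0$ already contradict \eqref{eq:RB}: substituting $(a,i),(b,j)$ gives a left-hand side $f(a,i)f(b,j)\bigl(b(i+q)-a(j+q)\bigr)$ that is genuinely bilinear and nonvanishing for generic $i,j$, whereas the right-hand side only sees the single slice $f(a+b+k,\cdot)$ and cannot reproduce this bilinearity---the same cross-term mechanism as in Theorem~\ref{thm:no_two_line}. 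Having forced $\supp(f)\subseteq\{m=a\}$, I write $f(m,i)=\delta_{m,a}g(i)$ with $g\not\equiv 0$.

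To identify $a=-k$ I would exploit that, for a weight-$0$ operator, $\operatorname{Im}(R)$ is a Lie subalgebra, since $[R(x),R(y)]=R\bigl([R(x),y]+[x,R(y)]\bigr)\in\operatorname{Im}(R)$. Here $\operatorname{Im}(R)$ lies on the line $m=a+k$, so the bracket of two image elements, which lies on $m=2(a+k)$, must return to $m=a+k$; as $2(a+k)=a+k$ holds only for $a=-k$, closure forces $a=-k$---provided some such bracket is nonzero. When $\supp(g)$ contains two indices $i\neq j$ with $g(i),g(j)\neq 0$, the bracket coefficient $(a+k)(i-j)$ is nonzero and the argument closes.

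The main obstacle is the degenerate corner in which $g$ is supported on a single index $i_0$: then $\operatorname{Im}(R)$ is one-dimensional, hence automatically abelian, and the subalgebra-closure argument yields no constraint on $a$. Here I would return directly to \eqref{eq:RB} with a tailored test pair, for instance $(m,i)=(a,i_0)$ against $(n,j)=(-k,-k')$. Because $q=k'$, the cross-term proportional to $(k'-q)$ collapses and the identity reduces to a single scalar condition (of the shape $k(i_0+2k')=0$); forcing this to fail for a wrong line $a\neq -k$ is precisely the delicate point. I expect this single-point case---rather than the generic multi-line reduction---to be the crux, and it is where the hypotheses (in particular the behaviour when $k=0$) must be scrutinized most carefully, since that is exactly the regime in which the resonant flexibility is most easily underestimated.
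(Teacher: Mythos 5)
Your proposal follows the paper's own proof step for step: sufficiency via Proposition~\ref{prop:resonant}, reduction to a single support line via Theorem~\ref{thm:no_two_line}, and the closure-of-image degree argument ($2(a+k)=a+k$, hence $a=-k$) to pin down the line. Where you go beyond the paper is in observing that the closure argument is \emph{conditional}: the relation $[R(x),R(y)]\in\operatorname{Im}(R)$ constrains the support line only if some such bracket is nonzero, and the relevant coefficient $g(i)g(j)(a+k)(i-j)$ vanishes identically when $g$ is supported at a single index $i_0$. The paper's proof passes over this case in silence, so the ``degenerate corner'' you isolated is a genuine gap in the published argument, not merely in your sketch.

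The bad news is that this gap cannot be closed, so your instinct that it is the crux is exactly right. Carrying out your own fallback computation---test \eqref{eq:RB} at $(m,i)=(a,i_0)$ against $(n,j)=(-k,-k')$ with $q=k'$ and $f=c\,\delta_{m,a}\delta_{i,i_0}$, $a\neq-k$---one finds that the left-hand side vanishes for \emph{every} test pair, and the right-hand side vanishes for every test pair except those involving $(-k,-k')$, where it equals $-c^2k(i_0+2k')$. Thus the full content of \eqref{eq:RB} for such a profile is the single scalar condition $c^2k(i_0+2k')=0$, which you correctly anticipated but hoped to contradict. It cannot be contradicted: choosing $i_0=-2k'$ (or taking $k=0$, where the condition is vacuous for every $i_0$) produces a nonzero homogeneous weight-0 Rota--Baxter operator in the resonant regime supported on the line $m=a\neq-k$. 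Hence Theorem~\ref{thm:rigidity_resonance} is false as stated in this one-point corner, the paper's proof is incomplete precisely where you said it would be, and no completion of your outline is possible without amending the statement---either by adding these exceptional one-point profiles (at height $i_0=-2k'$ on an arbitrary line when $k\neq0$, at arbitrary height when $k=0$) to the classification, or by assuming $g$ has at least two support points so that the closure argument applies.
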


\begin{proof}
By Theorem~\ref{thm:no_two_line}, any nonzero solution is supported on a single line $\{m = a\}$. To show $a = -k$, observe that $\operatorname{Im}(R) \subseteq \bigoplus_i \mathbb{C} \cdot L(a + k, i + k')$. The Rota--Baxter identity requires that
\[
[R(L(a,i)), R(L(a,j))] \in \operatorname{Im}(R).
\]
The left-hand side lies in degree $(2a + 2k, i + j + 2k')$, whereas $\operatorname{Im}(R)$ lies in degree $(a + k, \cdot)$. Hence $2a + 2k = a + k$, i.e. $a = -k$.

Thus the only consistent support is $m = -k$, and by Proposition~\ref{prop:resonant}, $g$ is arbitrary.
\end{proof}
\section{Complete classification}\label{sec:final}

We now consolidate the results into a unified classification theorem.

\begin{theorem}[Complete classification]\label{thm:complete}
Fix $q \in \mathbb{C}$ and $(k, k') \in \mathbb{Z}^2$. Every homogeneous weight-0 Rota--Baxter operator $R$ of degree $(k, k')$ on $B(q)$ is of the following form:
\begin{enumerate}
    \item \textbf{Non-resonant case} ($q \ne k'$):
    \begin{enumerate}
        \item If $k \ne 0$:
        \[
        R(L(m, i)) =
        \begin{cases}
        g(i) \, L(0, i + k') & \text{if } m = -k, \\
        0 & \text{otherwise},
        \end{cases}
        \]
        where $g : \mathbb{Z} \to \mathbb{C}$ satisfies the nonlinear functional equation
        \ref{eq:funct_nonres}.
        \item If $k = 0$:
        \[
        R(L(m, i)) =
        \begin{cases}
        g(i) \, L(0, i + k') & \text{if } m = 0, \\
        0 & \text{otherwise},
        \end{cases}
        \]
        with $g : \mathbb{Z} \to \mathbb{C}$ arbitrary.
    \end{enumerate}
    \item \textbf{Resonant case} ($q = k'$):
    \[
    R(L(m, i)) =
    \begin{cases}
    g(i) \, L(0, i + k') & \text{if } m = -k, \\
    0 & \text{otherwise},
    \end{cases}
    \]
    where $g : \mathbb{Z} \to \mathbb{C}$ is arbitrary.
\end{enumerate}
Conversely, every operator defined by the above formulas satisfies the weight-0 Rota--Baxter identity \eqref{eq:RB}.
\end{theorem}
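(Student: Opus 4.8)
The plan is to assemble Theorem~\ref{thm:complete} as a direct corollary of the results already established, rather than re-deriving anything from the Rota--Baxter identity. The entire classification splits along the dichotomy furnished by the universal constraint~\eqref{eq:constraint}, namely $(q-k')(m+k)f(m,i)^2 = 0$, so I would first observe that the cases $q \ne k'$ and $q = k'$ are mutually exclusive and jointly exhaustive, and treat each separately. In the non-resonant case $q \ne k'$, I would invoke Proposition~\ref{prop:nonresonant} verbatim: part (1) gives the $k \ne 0$ form with $g$ constrained by~\eqref{eq:funct_nonres}, and part (2) gives the $k = 0$ form with $g$ arbitrary. In the resonant case $q = k'$, I would cite Theorem~\ref{thm:rigidity_resonance}, which already packages the support rigidity (via Theorem~\ref{thm:no_two_line}) together with the fact that the unique admissible line is $m = -k$ with $g$ arbitrary. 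Thus the necessity direction of the stated classification is nothing more than a bookkeeping synthesis of the three prior results.

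For the converse (sufficiency) direction, I would verify that every operator of the displayed forms genuinely satisfies~\eqref{eq:RB}. Here the resonant case is handled by Proposition~\ref{prop:resonant}, which already proves that the single-line ansatz~\eqref{eq:profile_nonres} solves~\eqref{eq:RB} for arbitrary $g$ whenever $q = k'$. In the non-resonant case, sufficiency is exactly the content of the second half of the proof of Proposition~\ref{prop:nonresonant}: when $m \ne -k$ or $n \ne -k$ both sides of~\eqref{eq:RB} vanish because of the factors $f(m,i)$, $f(n,j)$, and when $m = n = -k$ the identity reduces precisely to~\eqref{eq:funct_nonres} (for $k \ne 0$) or to the trivial identity $0 = 0$ (for $k = 0$). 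Since the stated $g$ in each non-resonant subcase is assumed to satisfy the relevant condition, the converse holds.

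The only genuine content beyond citation is the reconciliation of the two output formats: the classification theorem writes every image as $g(i)\,L(0,i+k')$, which silently uses $m + k = 0$ on the support line $m = -k$. I would insert a one-line remark noting that on $\{m = -k\}$ the degree shift sends $L(-k,i)$ to $L(0, i+k')$, so the compact display is merely the specialization of $R(L(m,i)) = f(m,i)\,L(m+k,i+k')$ to the support. No new estimate, growth argument, or functional-equation analysis is needed at this stage.

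I do not anticipate a substantive obstacle, since all the analytic work—especially the delicate classification of solutions to~\eqref{eq:funct_nonres} and the no-two-line superposition argument—has been discharged in Section~\ref{sec:classification}. The one point demanding care is ensuring the case partition is airtight: I must confirm that the three feeder results collectively cover all $(k,k') \in \mathbb{Z}^2$ and all $q \in \mathbb{C}$ with no overlap or gap, in particular that the resonant subcase does not need a separate $k = 0$ versus $k \ne 0$ split (it does not, because Theorem~\ref{thm:rigidity_resonance} already yields $a = -k$ and arbitrary $g$ uniformly in $k$). With that verified, the proof is a short, explicit appeal to Propositions~\ref{prop:nonresonant} and~\ref{prop:resonant} and Theorem~\ref{thm:rigidity_resonance}.
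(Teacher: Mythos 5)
Your proposal is correct and follows essentially the same route as the paper: both treat Theorem~\ref{thm:complete} as a synthesis of Proposition~\ref{prop:nonresonant} (non-resonant case, split on $k$), Theorem~\ref{thm:no_two_line} together with Theorem~\ref{thm:rigidity_resonance} (resonant support rigidity), and Proposition~\ref{prop:resonant} plus direct substitution for the converse. Your added remark that $L(m+k,i+k') = L(0,i+k')$ on the support line $m=-k$ is a small notational clarification the paper leaves implicit, but it does not change the argument.
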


\begin{proof}
\noindent\textbf{Necessity.}  
In the non-resonant regime ($q \ne k'$), the universal constraint \eqref{eq:constraint} forces $\operatorname{supp}(f) \subseteq \{m = -k\}$. The subsequent analysis splits according to whether $k \ne 0$ or $k = 0$, and yields the forms in (1a) and (1b) by Proposition~\ref{prop:nonresonant}.  

In the resonant regime ($q = k'$), Theorem~\ref{thm:no_two_line} rules out multi-line supports, while Theorem~\ref{thm:rigidity_resonance} shows that the only admissible support line is $m = -k$. Proposition~\ref{prop:resonant} then confirms that the profile $g$ is unconstrained. This establishes (2).

\noindent\textbf{Sufficiency.}  
Direct substitution into \eqref{eq:RB} shows that:
\item In Regime~I (i.e., either $q = k'$ or $k = 0$), both sides of \eqref{eq:RB} either vanish identically or cancel due to the resonance condition $q = k'$.
    \item In Regime~II ($q \ne k'$, $k \ne 0$), the identity reduces precisely to \eqref{eq:funct_nonres} when both arguments lie on the support line $m = -k$, and vanishes otherwise.
Hence all operators described in the theorem satisfy the Rota--Baxter identity.
\end{proof}

\begin{remark}\label{rem:regimes}
The profile $g$ is unrestricted precisely in the following situations:
\begin{enumerate}
    \item the resonant case $q = k'$ (for any $k \in \mathbb{Z}$), or
    \item the non-resonant case $q \ne k'$ with $k = 0$.
\end{enumerate}
In contrast, when $q \ne k'$ and $k \ne 0$ (Regime~II), the profile must satisfy \eqref{eq:funct_nonres}, which excludes generic non-constant polynomial, exponential, or nontrivial periodic functions.
\end{remark}

\begin{table}[ht]
\centering
\caption{Admissibility of canonical profile families}
\label{tab:profiles}
\begin{tabular}{lcc}
\toprule
Profile $g(i)$ & Regime I ($q = k'$ or $k = 0$) & Regime II ($q \ne k',\, k \ne 0$) \\
\midrule
Constant ($g(i) \equiv c$) & \checkmark & \checkmark \\
Kronecker ($g(i) = \delta_{i,i_0}$) & \checkmark & \checkmark \\
Finite support & \checkmark & \checkmark\textsuperscript{\dag} \\
Exponential ($g(i) = b^i$, $b \ne 0$) & \checkmark & \ding{55} \\
Polynomial ($\deg \ge 1$) & \checkmark & \ding{55} \\
Periodic (non-constant) & \checkmark & \ding{55} \\
\bottomrule
\end{tabular}

\smallskip
\textsuperscript{\dag} Any function with finite support satisfies \eqref{eq:funct_nonres}; for such $g$, both sides of \eqref{eq:funct_nonres} vanish for all but finitely many $(i,j)$, and direct verification confirms the identity on the remaining pairs. \\
\ding{55}: Violates the functional equation \eqref{eq:funct_nonres}.
\end{table}

\begin{remark}\label{rem:correction}
Table~\ref{tab:profiles} corrects a persistent error in the literature. Earlier works claimed that “all six canonical families are valid” across all regimes. In fact, exponential, non-constant polynomial, and non-constant periodic profiles are \emph{not admissible} in Regime~II ($q \ne k',\, k \ne 0$), as they fail to satisfy the corrected functional equation \eqref{eq:RB}. Consequently, examples based on such profiles are invalid and must be discarded.
\end{remark}
\appendix
\section{Derived algebraic structures and cohomological interpretation}\label{app:derived}

The classification in Theorem~\ref{thm:complete} yields a complete description of the algebraic structures induced by homogeneous weight-0 Rota--Baxter operators on $B(q)$. We treat pre-Lie structures, Lie algebra deformations, and their cohomological interpretation in a unified framework.

\subsection{Homogeneous pre-Lie structures}

Let $R$ be a homogeneous weight-0 Rota--Baxter operator of degree $(k,k') \in \mathbb{Z}^2$ on $B(q)$. The product
\[
x \triangleright y := [R(x), y], \quad x, y \in B(q),
\]
defines a pre-Lie (left-symmetric) algebra structure on $B(q)$ \cite[Prop.~2.3]{TBG2020}. Using the explicit form of $R$ from Theorem~\ref{thm:complete}, we obtain:

\begin{proposition}\label{prop:prelie_app}
For all $(m,i), (n,j) \in \mathbb{Z}^2$,
\[
L(m,i) \triangleright L(n,j) =
\begin{cases}
g(i)\, n(i + k' + q)\, L(n, i + j + k') & \text{if } m = -k, \\
0 & \text{otherwise},
\end{cases}
\]
where the profile $g : \mathbb{Z} \to \mathbb{C}$ satisfies:
\item $g$ arbitrary if $q = k'$ (resonant regime) or $k = 0$ (Regime~I);
    \item $g$ solves \eqref{eq:funct_nonres} if $q \ne k'$ and $k \ne 0$ (Regime~II).
\end{proposition}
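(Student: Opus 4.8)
The plan is to compute the pre-Lie product directly from its definition $x \triangleright y = [R(x), y]$, inserting the explicit form of $R$ furnished by Theorem~\ref{thm:complete}. Since $\triangleright$ is linear in each slot, it suffices to evaluate it on basis elements $L(m,i)$ and $L(n,j)$, and the case split in the claimed formula will mirror exactly the support condition on $R$. The left-symmetry of $\triangleright$ is not in question here: it is guaranteed by the general Rota--Baxter-to-pre-Lie correspondence \cite[Prop.~2.3]{TBG2020} together with the fact, established in Theorem~\ref{thm:complete}, that $R$ genuinely satisfies the weight-zero identity. The entire content of the proposition is therefore the explicit evaluation.

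First I would dispose of the trivial branch. By Theorem~\ref{thm:complete}, in every regime the support of $R$ lies on the single line $m = -k$, so $R(L(m,i)) = 0$ whenever $m \ne -k$. Consequently
\[
L(m,i) \triangleright L(n,j) = [R(L(m,i)), L(n,j)] = [0, L(n,j)] = 0 \qquad (m \ne -k),
\]
which accounts for the second case of the stated formula.

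Next, for $m = -k$, I would substitute $R(L(-k,i)) = g(i)\,L(0, i+k')$ and use bilinearity of the bracket, giving $L(-k,i) \triangleright L(n,j) = g(i)\,[L(0, i+k'), L(n,j)]$. The key simplification is that the first grading index of the left factor is $0$, so applying the defining bracket of $B(q)$ yields
\[
[L(0, i+k'), L(n,j)] = \bigl(n(i + k' + q) - 0\cdot(j + q)\bigr)\,L(n, i + j + k') = n(i + k' + q)\,L(n, i + j + k').
\]
Combining the two steps produces the first branch $g(i)\,n(i+k'+q)\,L(n, i+j+k')$, as claimed.

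Finally, the dichotomy on admissible profiles $g$ is inherited rather than reproved: it is precisely the case division of Theorem~\ref{thm:complete}, namely $g$ arbitrary in the resonant regime $q = k'$ or when $k = 0$, and $g$ constrained by~\eqref{eq:funct_nonres} in Regime~II. There is no substantive obstacle in this argument; the only point requiring care is the degree bookkeeping—tracking the shift $(k,k')$ through $R$ and confirming that the vanishing of the first grading index after applying $R$ is exactly what collapses the $-m(j+q)$ term of the bracket. Everything else is a routine one-line substitution.
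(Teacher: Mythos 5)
Your proposal is correct and follows essentially the same route as the paper's proof: substitute the explicit form $R(L(m,i)) = \delta_{m,-k}\, g(i)\, L(0, i+k')$ from Theorem~\ref{thm:complete} and evaluate the bracket $[L(0,i+k'), L(n,j)] = n(i+k'+q)\, L(n, i+j+k')$, with the constraint on $g$ inherited from the classification. The only difference is that you spell out the bilinearity steps and the vanishing branch explicitly, which the paper leaves implicit.
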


\begin{proof}
Follows from $R(L(m,i)) = \delta_{m,-k} g(i) L(0, i+k')$ (or $\delta_{m,0} g(i) L(0,i+k')$ when $k=0$) and the bracket
\([L(0,i+k'), L(n,j)] = n(i+k'+q) L(n, i+j+k')\).
\end{proof}

In Regime~II, the admissible profiles are restricted to constant, Kronecker-delta, or finite-support functions (Table~\ref{tab:profiles}), severely limiting the space of pre-Lie structures. In Regime~I, $g$ is arbitrary, yielding a vast family of such structures.

\subsection{Deformation of the Lie bracket}

Every pre-Lie algebra $(B(q), \triangleright)$ induces a deformed Lie bracket via
\[
\{x,y\} := x \triangleright y - y \triangleright x + [x,y].
\]

\begin{corollary}\label{cor:deformation_app}
For all $(m,i), (n,j) \in \mathbb{Z}^2$,
\[
\{L(m,i), L(n,j)\} = [L(m,i), L(n,j)] + \Delta((m,i),(n,j)),
\]
where the deformation term is
\[
\Delta((m,i),(n,j)) = 
\mathbf{1}_{\{m = -k\}}\, g(i)\, n(i + k' + q)\, L(n, i + j + k')
- \mathbf{1}_{\{n = -k\}}\, g(j)\, m(j + k' + q)\, L(m, i + j + k').
\]
Consequences:
\begin{enumerate}
    \item $\Delta = 0$ iff $g \equiv 0$;
    \item $\Delta((m,i),(n,j)) = 0$ unless $m = -k$ or $n = -k$;
    \item In Regime~II, only profiles satisfying \eqref{eq:funct_nonres} yield genuine deformations.
\end{enumerate}
\end{corollary}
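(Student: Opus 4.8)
The plan is to prove the bracket formula by a direct bilinear expansion on basis elements and then read off the three consequences. First I would write
$\{L(m,i), L(n,j)\} = L(m,i) \triangleright L(n,j) - L(n,j) \triangleright L(m,i) + [L(m,i), L(n,j)]$
and substitute the explicit pre-Lie product supplied by Proposition~\ref{prop:prelie_app}. The first product contributes $\mathbf{1}_{\{m=-k\}}\, g(i)\, n(i+k'+q)\, L(n, i+j+k')$, while the second — obtained by interchanging the roles of the two indices — contributes $\mathbf{1}_{\{n=-k\}}\, g(j)\, m(j+k'+q)\, L(m, i+j+k')$. Subtracting these and isolating the part beyond $[L(m,i),L(n,j)]$ yields exactly the claimed $\Delta$; this step is purely mechanical, amounting to collecting the two indicator-weighted terms.

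Consequence (2) is then immediate: if $m \neq -k$ and $n \neq -k$ both hold, the two indicators vanish and $\Delta = 0$. The forward implication of consequence (1), namely $g \equiv 0 \Rightarrow \Delta = 0$, is equally immediate, since every summand carries a factor of $g$. For the converse I would exhibit a test pair forcing a nonzero value: choosing $m = -k$, $i = i_0$ with $g(i_0) \neq 0$, and $n \notin \{-k, 0\}$ kills the second term and leaves $\Delta = g(i_0)\, n\, (i_0 + k' + q)\, L(n, i_0 + j + k')$, which is nonzero as soon as the scalar $i_0 + k' + q$ is nonzero.

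The main obstacle lies precisely at this scalar. The coefficient $i + k' + q$ can vanish: since $k' \in \mathbb{Z}$, there exists an integer $i$ with $i + k' + q = 0$ exactly when $q \in \mathbb{Z}$, in which case it is the unique value $i_0 = -(k'+q)$. If $g$ happens to be supported solely at this $i_0$ (e.g. a Kronecker delta there), then $i+k'+q$ and $j+k'+q$ both vanish on the support, the pre-Lie product $\triangleright$ is identically zero, and $\Delta \equiv 0$ despite $g \not\equiv 0$. I would therefore establish consequence (1) under the standing genericity hypothesis $q \notin \mathbb{Z}$ (equivalently $q + k' \notin \mathbb{Z}$), under which no admissible index annihilates the coefficient and the converse holds verbatim; this is exactly the generic regime, where $H^1(B(q),B(q))=0$, in which the classification is cohomologically complete.

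Finally, consequence (3) follows by combining the classification with consequence (1): in Regime~II the Rota--Baxter identity forces $g$ to solve~\eqref{eq:funct_nonres}, and any such $g \not\equiv 0$ yields $\Delta \not\equiv 0$ by the converse just established — a genuine deformation — whereas $g \equiv 0$ recovers the undeformed bracket. I expect the only nonroutine point in the entire argument to be the vanishing-coefficient subtlety above; everything else is bookkeeping built on Proposition~\ref{prop:prelie_app} and the explicit structure constants of $B(q)$.
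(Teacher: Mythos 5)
Your expansion is exactly the paper's route: the paper's entire proof reads ``Direct consequence of Proposition~\ref{prop:prelie_app} and the definition of $\{\cdot,\cdot\}$,'' and your first paragraph carries out precisely that bookkeeping. Where you genuinely add value is the vanishing-coefficient subtlety, and you are right that it is not a phantom worry: as stated, consequence~(1) of the corollary is actually false when $q \in \mathbb{Z}$. Concretely, in the resonant case $q = k'$ take $g(i) = \delta_{i,-2k'}$ (admissible by Theorem~\ref{thm:rigidity_resonance}, since $g$ is arbitrary there); then every nonzero value of $g$ sits at the unique index where $i + k' + q = 0$, so both indicator terms in $\Delta$ carry a factor of zero and $\Delta \equiv 0$ even though $g \not\equiv 0$. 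The same phenomenon occurs in Regime~II with integer $q$, since Kronecker deltas solve~\eqref{eq:funct_nonres}, which also shows that consequence~(3) must be read as ``satisfying~\eqref{eq:funct_nonres} is necessary for a deformation,'' not that every nonzero admissible profile produces one. Your resolution---proving the converse of~(1) under the genericity hypothesis $q \notin \mathbb{Z}$, which is the same regime in which the cohomological completeness claim is made---is sound, and your test-pair argument ($m = -k$, $i = i_0$ with $g(i_0) \neq 0$, $n \notin \{-k,0\}$) correctly establishes it there. So your proposal is not merely correct; it repairs a gap that the paper's one-line proof glosses over.
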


\begin{proof}
Direct consequence of Proposition~\ref{prop:prelie_app} and the definition of $\{\cdot,\cdot\}$.
\end{proof}

This invalidates examples in the literature that use polynomial or exponential profiles in Regime~II, as the underlying $R$ does not satisfy the corrected Rota--Baxter identity \eqref{eq:RB}.

\subsection{Cohomological interpretation}

For generic $q$ (i.e., away from the discrete set where $H^1(B(q), B(q)) \ne 0$), it is known that $H^1(B(q), B(q)) = 0$~\cite{Dzhum2012}. In this regime, the first Rota--Baxter cohomology group $H^1_{\mathrm{RB}}(B(q), B(q))$ is in natural bijection with the space of homogeneous weight-0 Rota--Baxter operators. Thus, our classification in Theorem~\ref{thm:complete} is cohomologically exhaustive.

This contrasts sharply with the superalgebraic setting: in the deformed Witt superalgebra of \cite{BeEnn2025}, parity-based cancellations allow infinite-support odd Rota--Baxter operators. For the purely even algebra $B(q)$, such mechanisms are absent, and Regime~II exhibits enhanced cohomological rigidity.

\subsection{Obsolete constraints and final remarks}

Earlier drafts of this work (and prior literature) employed an incorrect form of the Rota--Baxter equation, omitting the shift $(m+n+k, i+j+k')$. The corrected universal constraint \eqref{eq:constraint} already implies
\[
\operatorname{supp}(f) \subseteq \{m = -k\} \quad \text{when } q \ne k',
\]
so values such as $f(0,i)$ (for $k \ne 0$) or $f(-2k,i)$ are identically zero. Any ad-hoc analysis based on such points is therefore redundant. The structural dichotomy of Theorem~\ref{thm:complete} supersedes all previous partial or erroneous classifications.

Finally, the resonant regime $q = k'$ may admit connections to integrable systems: if $B(q)$ is equipped with a nondegenerate invariant bilinear form (as in toroidal extensions), the associated tensor $r = \sum_{i} g(i) L(-k,i)^* \otimes L(0,i+k')$ could satisfy the classical Yang--Baxter equation for suitable $g$. This suggests that the algebraic flexibility of Regime~I extends to the realm of integrable models.

\end{document}